\theoremstyle{plain}
\newtheorem{thm}{Theorem}[section]
\newtheorem*{thm*}{Theorem}
\newtheorem*{cor*}{Corollary}
\newtheorem{lem}[thm]{Lemma}
\newtheorem{cor}[thm]{Corollary}
\newtheorem*{claim*}{Claim}
\theoremstyle{definition}
\newtheorem{ex}[thm]{Example}
\theoremstyle{remark}
\newtheorem{rem}[thm]{Remark}
\numberwithin{equation}{thm}
\def\Im{\operatorname{Im}}
\def\Ker{\operatorname{Ker}}
\def\bbZ{\mathbb{Z}}
\def\bbN{\mathbb{N}}
\def\m{\mathfrak m}
\newcommand{\rma}{\mathrm{a}}
\newcommand{\rmc}{\mathrm{c}}
\newcommand{\rme}{\mathrm{e}}
\newcommand{\rmf}{\mathrm{f}}
\newcommand{\rmr}{\mathrm{r}}
\newcommand{\rmI}{\mathrm{I}}
\newcommand{\rmK}{\mathrm{K}}
\newcommand{\rmQ}{\mathrm{Q}}
\newcommand{\fkm}{\mathfrak{m}}
\newcommand{\mapright}[1]{%
\smash{\mathop{%
\hbox to 1cm{\rightarrowfill}}\limits^{#1}}}
\newcommand{\mapleft}[1]{%
\smash{\mathop{%
\hbox to 1cm{\leftarrowfill}}\limits_{#1}}}
\title[Remarks on almost Gorenstein rings]{Remarks on almost Gorenstein rings}
\author[Naoki Endo]{Naoki Endo}
\address{School of Political Science and Economics, Meiji University, 1-9-1 Eifuku, Suginami-ku, Tokyo 168-8555, Japan}
\email{endo@meiji.ac.jp}
\urladdr{https://www.isc.meiji.ac.jp/~endo/}
\author[Naoyuki Matsuoka]{Naoyuki Matsuoka}
\address{Department of Mathematics, School of Science and Technology, Meiji University, 1-1-1 Higashi-mita, Tama-ku, Kawasaki 214-8571, Japan}
\email{naomatsu@meiji.ac.jp}
\thanks{2020 {\em Mathematics Subject Classification.} 13H10, 13A15, 13A02.}
\thanks{{\em Key words and phrases.} Almost Gorenstein local ring, Almost Gorenstein graded ring, numerical semigroup}
\thanks{The first author was partially supported by JSPS Grant-in-Aid for Young Scientists 20K14299 and JSPS Grant-in-Aid for Scientific Research (C) 23K03058.}
\begin{document}

\maketitle

\setlength{\baselineskip} {15pt}

\begin{abstract}
This paper investigates the relation between the almost Gorenstein properties for graded rings and for local rings. Once $R$ is an almost Gorenstein graded ring, the localization $R_M$ of $R$ at the graded maximal ideal $M$ is almost Gorenstein as a local ring. The converse does not hold true in general (\cite[Theorems 2.7, 2.8]{GMTY2}, \cite[Example 8.8]{GTT}). However, it does for one-dimensional graded domains with mild conditions, which we clarify in the present paper. We explore the defining ideals of almost Gorenstein numerical semigroup rings as well.
\end{abstract}



\section{Introduction}



{\it An almost Gorenstein ring}, which we focus on in the present paper, is one of the attempts to generalize Gorenstein rings. The motivation for this generalization comes from the strong desire to stratify Cohen-Macaulay rings, finding new and interesting classes which naturally include that of Gorenstein rings. The theory of almost Gorenstein rings was introduced by Barucci and Fr{\"o}berg \cite{BF} in the case where the local rings are analytically unramified and of dimension one, e.g., numerical semigroup rings over a field. In 2013, their work inspired Goto, the second author of this paper, and Phuong \cite{GMP} to extend the notion of almost Gorenstein rings for arbitrary one-dimensional Cohen-Macaulay local rings. 
More precisely, a one-dimensional Cohen-Macaulay local ring $R$ is called {\it almost Gorenstein} if $R$ admits a canonical ideal $I$ of $R$ such that $\rme_1(I) \le \rmr(R)$, where $\rme_1(I)$ denotes the first Hilbert coefficients of $R$ with respect to $I$ and $\rmr(R)$ is the Cohen-Macaulay type of $R$ (\cite[Definition 3.1]{GMP}). Two years later, Goto, Takahashi, and the first author of this paper \cite{GTT} defined the almost Gorenstein graded/local rings of arbitrary dimension. 
Let $(R, \m)$ be a Cohen-Macaulay local ring. Then $R$ is said to be an {\it almost Gorenstein local ring} if $R$ admits a canonical module $\rmK_R$ and there exists an exact sequence
$$
0 \to R \to \rmK_R \to C \to 0
$$ 
of $R$-modules such that $\mu_R(C) = \rme^0_\m(C)$ (\cite[Definition 3.3]{GTT}). Here, $\mu_R(-)$ (resp. $\rme^0_\m(-)$) denotes the number of elements in a minimal system of generators (resp. the multiplicity with respect to $\m$). When $\dim R=1$, if $R$ is an almost Gorenstein local ring in the sense of \cite{GTT}, then $R$ is almost Gorenstein in the sense of \cite{GMP}, and vice versa provided the field $R/\fkm$ is infinite (\cite[Proposition 3.4]{GTT}). 
However, the converse does not hold in general (\cite[Remark 3.5]{GTT}, see also \cite[Remark 2.10]{GMP}). 
Similarly as in local rings, a Cohen-Macaulay graded ring $R=\oplus_{n \ge 0}R_n$ with $k=R_0$ a local ring is called an {\it almost Gorenstein graded ring} if $R$ admits a graded canonical module $\rmK_R$ and there exists an exact sequence
$$
0 \to R \to \rmK_R(-a) \to C \to 0
$$ 
of graded $R$-modules with $\mu_R(C) = \rme^0_M(C)$ (\cite[Definition 8.1]{GTT}). Here, $M$ is the graded maximal ideal of $R$, $a=\rma(R)$ is the a-invariant of $R$, and $\rmK_R(-a)$ denotes the graded $R$-module whose underlying $R$-module is the same as that of $\rmK_R$ and whose grading is given by $[\rmK_R(-a)]_n = [\rmK_R]_{n-a}$ for all $n \in \Bbb Z$.

Every Gorenstein local/graded ring is almost Gorenstein. 
The definitions assert that once $R$ is an almost Gorenstein local (resp. graded) ring, either $R$ is a Gorenstein ring, or even though $R$ is not a Gorenstein ring, the local (resp. graded) ring $R$ is embedded into the module $\rmK_R$ (resp. the graded module $\rmK_R(-a)$) and the difference $C$ behaves well.
Moreover, if $R$ is an almost Gorenstein graded ring, then the localization $R_M$ of $R$ at $M$ is an almost Gorenstein local ring, which readily follows from the definition. However, 
the converse does not hold true in general (\cite[Theorems 2.7, 2.8]{GMTY2}, \cite[Example 8.8]{GTT}), even though it does for determinantal rings of generic, as well as symmetric, matrices over a field (\cite[Theorem 1.1]{CELW}, \cite[Theorem 1.1]{T}). 

In this paper we investigate the question of when the converse holds in one-dimensional rings. Throughout this paper, unless otherwise specified, let $R = \bigoplus_{n \ge 0}R_n$ be a one-dimensional Noetherian $\bbZ$-graded integral domain admitting a graded canonical module $\rmK_R$. We assume $k=R_0$ is a field, and $R_n \ne (0)$ and $R_{n+1} \ne (0)$ for some $n \ge 0$. 
Let $M$ denote the graded maximal ideal of $R$.

With this notation this paper aims at proving the following result.

\begin{thm}\label{main}
There exists a graded canonical ideal $J$ of $R$ containing a parameter ideal as a reduction,  
and the following conditions are equivalent. 
\begin{enumerate}[$(1)$]
\item $R$ is an almost Gorenstein graded ring. 
\item $R_{M}$ is an almost Gorenstein local ring in the sense of \cite[Definition 3.1]{GMP}.
\item $R_{M}$ is an almost Gorenstein local ring in the sense of \cite[Definition 3.3]{GTT}.
\end{enumerate}
\end{thm}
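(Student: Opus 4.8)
The plan is to collapse all three conditions onto a single intrinsic equality of graded ideals, $MJ=Ma$, attached to a graded canonical ideal $J$ and a carefully chosen homogeneous reduction $a$. The reason the converse fails in the ungraded local theory is that deducing \cite[Definition 3.3]{GTT} from \cite[Definition 3.1]{GMP} needs a \emph{minimal reduction} of the canonical ideal, which in general forces $R/\m$ to be infinite (\cite[Proposition 3.4]{GTT}). So the real content is to build, by hand, a homogeneous parameter that is a reduction of a graded canonical ideal even when $k=R_0$ is finite; the hypothesis $R_n\neq(0)$, $R_{n+1}\neq(0)$ is precisely what makes this available.

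First I would realize $\rmK_R$ as a graded ideal. Since $R$ is a one-dimensional graded domain, $\rmK_R$ is a rank-one graded torsion-free module, so after a degree shift there is a graded embedding $\rmK_R\hookrightarrow R$ whose image is a graded $M$-primary ideal $J\cong\rmK_R$ (up to shift); writing $c=\min\{n : J_n\neq(0)\}$ one has $c\geq1$, since $J_0\neq(0)$ would give $1\in J$ and $J=R$. The crux is the reduction. The hypothesis makes $\{n : R_n\neq(0)\}$ a subsemigroup of $\bbN$ containing two consecutive integers, hence of gcd $1$ and cofinite; consequently the normalization $\overline{R}$ is a one-dimensional normal $\bbN$-graded domain with gcd-one value group, i.e.\ a polynomial ring $\overline{R}=k'[t]$ with $\deg t=1$ and $k'/k$ finite. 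In $\overline{R}$ every homogeneous ideal is principal, so $J\overline{R}=t^{c}\overline{R}$; choosing $0\neq a\in J_c$ forces $a=\lambda t^{c}$ with $0\neq\lambda\in k'$, whence $a\overline{R}=J\overline{R}$ and therefore $J\subseteq\overline{(a)}$. Thus $(a)$ is a reduction of $J$ with $a$ homogeneous of positive degree $c$; this proves the first assertion, and it supplies a minimal reduction $a_M$ of $J_M$ in $R_M$ \emph{with no hypothesis on $k$}.

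Next I would set $K=a^{-1}J\subseteq Q(R)$. From $aR\subseteq J\subseteq a\overline{R}$ we get $R\subseteq K\subseteq\overline{R}$ with $1\in K_0$ and $K_n=(0)$ for $n<0$, so $K\cong\rmK_R(-\rma(R))$ is the graded canonical module normalized to begin in degree $0$, and $0\to R\to K\to C\to 0$ with $C=K/R$ is a sequence of the type in the definition of almost Gorenstein graded ring. Because $K\subseteq\overline{R}$ and $\overline{R}/R$ has finite length, $C$ is a finite-length (graded) module, so $\rme^0_M(C)=\ell_R(C)$ and $\mu_R(C)=\ell_R(C/MC)$; hence $\mu_R(C)=\rme^0_M(C)$ if and only if $MC=0$, i.e.\ $MK\subseteq R$, i.e.\ $MJ\subseteq aR$. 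Using that $a,J$ are homogeneous with $a\in M$ and $\min\deg(MJ)>\deg a$, this is equivalent to $MJ=Ma$. The same computation performed over $R_M$ shows $R_M$ satisfies \cite[Definition 3.3]{GTT} if and only if $M_MJ_M=M_M a_M$, which is $MJ=Ma$ localized at $M$; since $MJ$ and $Ma$ are graded $M$-primary, the graded equality holds iff its localization does. This gives $(1)\Leftrightarrow(3)$ directly. Finally, $(2)\Leftrightarrow(3)$: the availability of the minimal reduction $a_M$ lets one run the Goto--Matsuoka--Phuong analysis of $\rme_1(J_M)\le\rmr(R_M)$ verbatim, identifying $(2)$ with $M_MJ_M=M_M a_M$ as well, so the infinite-residue-field hypothesis of \cite[Proposition 3.4]{GTT} is no longer needed.

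The main obstacle is the middle step: over a finite field one cannot select a minimal reduction of the canonical ideal by a general-position argument, and this is exactly the gap behind the failure of the converse in the ungraded setting. I expect the normalization identity $\overline{R}=k'[t]$ with $\deg t=1$ to close it, and the remaining bookkeeping---the shift identifying $a^{-1}J$ with $\rmK_R(-\rma(R))$, the finite length of $C$, and the invariance of $\mu_R(C)$, $\rme^0_M(C)$, $\rme_1$, and $\rmr$ under localization at $M$---to be routine.
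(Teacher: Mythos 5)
Your proposal is correct and takes essentially the same route as the paper: the same realization of $\rmK_R$ as a graded ideal $J$, the same key fact that $\overline{R}$ is a polynomial ring over a finite extension of $k$ with a degree-one variable (the paper's Lemma 2.1), the same observation that the lowest-degree homogeneous element of $J$ generates $J\overline{R}$ and hence gives a homogeneous parameter reduction, and the same transfer of the local equality $\m J_M=\m a$ (via the Goto--Matsuoka--Phuong characterization) to the graded equality $MJ=Ma$, which makes the cokernel of the specific graded embedding Ulrich. The only difference is organizational: you route all three conditions through the single equality $MJ=Ma$, whereas the paper proves $(2)\Rightarrow(1)$ directly and disposes of $(2)\Leftrightarrow(3)$ by noting that the proof of \cite[Proposition 3.4]{GTT} applies once the parameter reduction exists, with no hypothesis on the residue field.
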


Theorem \ref{main} guarantees the existence of a (graded) canonical ideal admitting a parameter ideal as a reduction; hence the proof of \cite[Proposition 3.4]{GTT} shows that the conditions $(2)$ and $(3)$ are equivalent even though the field $R/M$ is finite. As we mentioned, by the definition of almost Gorenstein local/graded rings we only need to verify the implication $(2) \Rightarrow (1)$. What makes $(2)\Rightarrow (1)$ interesting and difficult is that the implication is not true in general.




Let us now explain how this paper is organized. We prove Theorem \ref{main} in Section 2 after preparing some auxiliaries. We also explore the explicit generators of defining ideals of almost Gorenstein numerical semigroup rings. Section 3 is devoted to providing examples illustrating Theorem \ref{main}.


\section{Proof of Theorem \ref{main}}

Let $S$ be the set of non-zero homogeneous elements in $R$. The localization $S^{-1}R = K[t, t^{-1}]$ of $R$ with respect to $S$ is a {\it simple} graded ring, i.e., every non-zero homogeneous element is invertible, where $t$ is a homogeneous element of degree $1$ (remember that $R_n \ne (0)$ and $R_{n+1} \ne (0)$) which is transcendental over $k$, and $K=[\, S^{-1}R\, ]_0$ is a field. 
Let $\overline{R}$ be the integral closure of $R$ in its quotient field $\rmQ(R)$. 

We begin with the following, which has already appeared in \cite[Lemma 2.1]{E}. Because it plays an important role in our argument, we include a brief proof for the sake of completeness. 

\begin{lem}
The equality $\overline{R} = K[t]$ holds in $\rmQ(R)$. 
\end{lem}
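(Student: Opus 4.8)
The plan is to establish the two inclusions $\overline{R} \subseteq K[t]$ and $K[t] \subseteq \overline{R}$ separately, working throughout inside the common fraction field. Since $S^{-1}R = K[t,t^{-1}]$ is a localization of the domain $R$, the two rings share a field of fractions, namely $\rmQ(R) = \rmQ(K[t,t^{-1}]) = K(t)$. First I would record that $K[t]$, being a polynomial ring over the field $K$, is a principal ideal domain and hence integrally closed in $K(t) = \rmQ(R)$. Next I would observe that $R$ already sits inside $K[t]$: for each $n \ge 0$ the injection $R \hookrightarrow S^{-1}R$ sends the homogeneous piece $R_n$ into $[S^{-1}R]_n = K t^n \subseteq K[t]$, and as $R = \bigoplus_{n \ge 0} R_n$ is non-negatively graded this gives $R \subseteq K[t]$. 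Passing to integral closures in $\rmQ(R)$ and using that $K[t]$ is integrally closed then yields $\overline{R} \subseteq K[t]$.

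For the reverse inclusion, since $\overline{R}$ is a ring it is enough to verify that both $K$ and $t$ are integral over $R$, and the first of these is the heart of the matter. Here I would argue that $K$ is algebraic over $k$ as follows: $R$ is a Noetherian $\bbN$-graded ring with $R_0 = k$ a field, so it is a finitely generated $k$-algebra; being an affine domain of dimension one, its field of fractions has transcendence degree one over $k$. Because $\rmQ(R) = K(t)$ with $t$ transcendental over $k$, the intermediate field $K$ must therefore be algebraic over $k$. As $k$ is a field contained in $R$, every element of $K$ is consequently integral over $R$, that is $K \subseteq \overline{R}$.

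Granting $K \subseteq \overline{R}$, the integrality of $t$ will follow quickly. The hypothesis that $R_n \ne 0$ and $R_{n+1} \ne 0$ for some $n$ provides a nonzero homogeneous element $a \in R_m$ with $m \ge 1$; writing $a = c\,t^m$ with $c \in K$, $c \ne 0$, gives $t^m = c^{-1}a \in \overline{R}$, and then $t$, being integral over the integrally closed ring $\overline{R}$, lies in $\overline{R}$. Hence the subring generated by $K$ and $t$ is contained in $\overline{R}$, i.e. $K[t] \subseteq \overline{R}$, and combining this with the first inclusion gives $\overline{R} = K[t]$.

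I expect the only substantive point to be the claim that $K$ is algebraic over $k$; the manipulations with integral closures surrounding it are purely formal. The key input making that claim work is that a Noetherian $\bbN$-graded ring with a field in degree zero is automatically an affine $k$-algebra, so that its Krull dimension agrees with the transcendence degree of its fraction field and the hypothesis $\dim R = 1$ can be transferred into the statement that $K/k$ is algebraic.
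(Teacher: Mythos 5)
Your proof is correct, and it takes a genuinely different route from the paper's. You pin $\overline{R}$ between two inclusions: $\overline{R} \subseteq K[t]$ follows because $R$ is non-negatively graded, hence $R \subseteq K[t]$, and the polynomial ring $K[t]$ is integrally closed in $K(t)=\rmQ(R)$; and $K[t] \subseteq \overline{R}$ follows from the transcendence-degree count $1 = \dim R = \operatorname{trdeg}_k \rmQ(R) = \operatorname{trdeg}_k K + 1$, which forces $K$ to be algebraic over $k$ (hence $K \subseteq \overline{R}$), together with the root extraction $t^m = c^{-1}a \in \overline{R}$ giving $t \in \overline{R}$. The paper instead analyzes the structure of $\overline{R}$ itself: $\overline{R}$ is graded and contained in $S^{-1}R$ (Zariski--Samuel), it is module-finite over $R$ because $k$ is Nagata, its degree-zero part $L$ is a field, and since $\overline{R}_N$ is a DVR the graded maximal ideal $N$ is principal, generated by a homogeneous element $f$ of degree $q>0$; hence $\overline{R}=L[f]$, and comparing the simple graded rings $L[f,f^{-1}]$ and $K[t,t^{-1}]$ forces $L=K$ and $q=1$. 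Your argument is more elementary in its inputs---it avoids both the gradedness of integral closures and any Nagata/finiteness considerations, with one-dimensionality entering through $\dim R = \operatorname{trdeg}_k\rmQ(R)$ rather than through the DVR $\overline{R}_N$. What the paper's proof buys in return is the by-product that $\overline{R}$ is a finite $R$-module, which is quietly used later in the proof of Theorem \ref{main} (the reduction argument there needs $r=\mu_R(\overline{R})-1$ to be finite); on your route that finiteness would need a separate appeal to Krull--Akizuki or to the same Nagata property. One point you should make explicit: you use that $t$ is transcendental over $K$ (so that $K[t]$ is a polynomial ring and $\operatorname{trdeg}_K K(t)=1$), not merely over $k$; this is automatic because $t$ has degree $1$ and $K$ sits in degree $0$, so any polynomial relation would vanish degreewise, but it deserves a sentence.
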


\begin{proof}
As $R$ is an integral domain, we obtain that $\overline{R}$ is a graded ring and $\overline{R} \subseteq S^{-1}R=K[t, t^{-1}]$ (\cite[page 157]{ZS}). Since the field $k$ is Nagata, so is the finitely generated $k$-algebra $R$. Hence $\overline{R}$ is a module-finite extension of $R$. One can verify that $R_n = (0)$ for all $n<0$ and $R_0 = k$; hence $[\,\overline{R}\,]_n = (0)$ for all $n<0$, $L=[\,\overline{R}\,]_0$ is a field, and $k \subseteq L \subseteq K$. We set $N = \bigoplus_{n > 0}[\,\overline{R}\,]_n$. Since the local ring $\overline{R}_N$ of $\overline{R}$ at the maximal ideal $N$ is a DVR, the ideal $N$ is principal. Choose a homogeneous element $f \in \overline{R}$ of degree $q>0$ with $N = f\overline{R}$. Then 
$$
\overline{R} = L[N] = L[f] \subseteq S^{-1}R =K[t, t^{-1}]. 
$$ 
Besides, because $\overline{R}[f^{-1}] = L[f, f^{-1}]$ is a simple graded ring and $R \subseteq \overline{R}[f^{-1}]$, we have $S^{-1}R \subseteq \overline{R}[f^{-1}]=L[f, f^{-1}]$. Thus $K[t, t^{-1}] = L[f, f^{-1}]$, so that $K=L$ and $q = 1$. Consequently, $\overline{R} = L[f] = K[f] = K[t]$, as desired. 
\end{proof}

We are ready to prove Theorem \ref{main}.

\begin{proof}[Proof of Theorem \ref{main}]
Since $S^{-1}\rmK_R \cong S^{-1}R$ as a graded $S^{-1}R$-modules, we have an injective homomorphism $0 \to \rmK_R \overset{\varphi}{\to} S^{-1}R$ of graded $R$-modules. Choose $s \in S$ such that $s\cdot \varphi(\rmK_R) \subsetneq R$. Set $J = s\cdot \varphi(\rmK_R)$ and $q = -\deg s$. Then $\rmK_R(q) \cong J$ as a graded $R$-module.
By setting $a=\rma(R)$ and $\ell = -(q+a)$, we then have $J_{\ell} \ne (0)$ and $J_n =(0)$ for all $n< \ell$. We now choose a non-zero homogeneous element $f \in J_{\ell}$. Note that $\ell >0$ and $f \in \left[\ \!  \overline{R}\ \! \right]_{\ell}$. Therefore 
$$
J \overline{R} = t^{\ell}\overline{R} = f\overline{R}. 
$$
This shows the equality $J^{r+1} = f J^r$ holds where $r = \mu_R(\overline{R})-1$. Here, we recall that $\mu_R(-)$ denotes the number of elements in a minimal system of generators. Hence, $J$ is a graded canonical ideal of $R$ which contains a parameter ideal $(f)$ of $R$ as a minimal reduction. 

As for the equivalent conditions, we only need to show the implication $(2) \Rightarrow (1)$. 

$(2) \Rightarrow (1)$ 
We consider the exact sequence
$$
0 \to R \overset{\psi}{\to} J(\ell) \to C \to 0
$$
of graded $R$-modules with $\psi(1) = f$. Since $\m I = \m f$, we get $MJ=Mf$ and hence $MC = (0)$, i.e. $C$ is an Ulrich $R$-module. Therefore $R$ is an almost Gorenstein graded ring because $J(\ell) \cong \rmK_R(-a)$. This completes the proof.
\if0
Since $\m I =\m f$, we get $MJ = Mf$. We consider the exact sequence
$$
0 \to R \overset{\varphi}{\to} J(\ell) \to C \to 0
$$
of graded $R$-modules with $\varphi(1) = f$. Then $J(\ell) \cong \rmK_R(-a)$, and $MC=(0)$, i.e., $C$ is an Ulrich $R$-module. Hence $R$ is almost Gorenstein as a graded ring. This completes the proof. 

\textcolor{red}{(Then のあとに$J(\ell) \cong \rmK_R(-a)$があるのが少し気になったので，別バージョンを考えました。また，$f$など他の記号はそこまでの証明の記号を継続して用いているので，気をつけたほうがいいように感じたため，上で用いている$\varphi$は$\psi$に変更しています。順序を適当に変えただけで，何も変わっていません。)}
\fi
\end{proof}


Let $\bbN$ be the set of non-negative integers. A {\it numerical semigroup} is a non-empty subset $H$ of $\bbN$ which is closed under addition, contains the zero element, and whose complement in $\bbN$ is finite. Every numerical semigroup $H$ admits a finite minimal system of generators, i.e., there exist positive integers $a_1, a_2, \ldots, a_{\ell} \in H~(\ell \ge 1)$ with $\gcd(a_1, a_2,\ldots , a_{\ell}) =1$ such that 
$H = \left<a_1, a_2, \ldots, a_\ell\right>=\left\{\sum_{i=1}^\ell c_ia_i ~\middle|~  c_i \in \Bbb N~\text{for~all}~1 \le i \le \ell \right\}$. For a field $k$, the ring $k[H] = k[t^{a_1}, t^{a_2}, \ldots, t^{a_\ell}]$ (or $k[[H]] =k[[t^{a_1}, t^{a_2}, \ldots, t^{a_\ell}]]$) is called the {\it numerical semigroup ring} of $H$ over $k$, where $t$ denotes an indeterminate over $k$. Note that the ring $k[H]$ satisfies the assumption of Theorem \ref{main}.  Let $M$ be the graded maximal ideal of $R$. 
Since every non-zero ideal in numerical semigroup rings admits its minimal reduction, the two definitions for almost Gorenstein local rings \cite[Definition 3.1]{GMP} and \cite[Definition 3.3]{GTT} are equivalent.
In addition, the local ring $k[H]_M$ is almost Gorenstein if and only if $k[[H]]$ is an almost Gorenstein local ring; equivalently, the semigroup $H$ is almost symmetric (\cite[Proposition 29]{BF}, \cite[Theorem 2.4]{N}). 

Hence we have the following, which recovers a result of Goto, Kien, Matsuoka, and Truong.

\begin{cor}[{\cite[Proposition 2.3]{GKMT}}]
A numerical semigroup ring $k[[H]]$ is an almost Gorenstein local ring if and only if $k[H]$ is an almost Gorenstein graded ring, or equivalently, $H$ is almost symmetric. 
\end{cor}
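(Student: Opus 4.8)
The plan is to obtain the corollary as a direct specialization of Theorem \ref{main}, fed into the two facts recorded just above: the coincidence of the two local definitions of almost Gorensteinness for numerical semigroup rings, and the chain relating $k[H]_M$, $k[[H]]$, and the almost symmetry of $H$.

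First I would verify that $R = k[H]$ satisfies the running hypotheses of Theorem \ref{main}. The ring $k[H] = k[t^{a_1}, \ldots, t^{a_\ell}]$ is a one-dimensional Noetherian $\bbZ$-graded integral domain with $R_0 = k$ a field; being Cohen-Macaulay of dimension one it admits a graded canonical module $\rmK_R$; and since $\bbN \setminus H$ is finite, both $R_n \ne (0)$ and $R_{n+1} \ne (0)$ hold for all sufficiently large $n$. Hence Theorem \ref{main} applies to $R = k[H]$ verbatim, and its equivalence $(1) \Leftrightarrow (2)$ yields that $k[H]$ is an almost Gorenstein graded ring if and only if $k[H]_M$ is an almost Gorenstein local ring.

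It then remains to splice in the facts already established for numerical semigroup rings: since every non-zero ideal of $k[H]$ (equivalently of $k[[H]]$) carries a minimal reduction, the definitions of \cite[Definition 3.1]{GMP} and \cite[Definition 3.3]{GTT} agree locally, and $k[H]_M$ is almost Gorenstein precisely when $k[[H]]$ is, which in turn holds precisely when $H$ is almost symmetric (\cite[Proposition 29]{BF}, \cite[Theorem 2.4]{N}). Concatenating these equivalences with the output of Theorem \ref{main} gives the full three-way equivalence asserted by the corollary.

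Because the statement is a specialization of Theorem \ref{main}, I expect no genuine obstacle beyond careful bookkeeping of the cited equivalences. The only step meriting attention is the transition between the graded localization $k[H]_M$ and the completion $k[[H]]$: here one relies on the fact that this faithfully flat local base change preserves the almost Gorenstein property, which is exactly what the cited results supply.
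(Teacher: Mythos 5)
Your proposal is correct and follows essentially the same route as the paper: the paper likewise establishes the corollary by checking that $k[H]$ satisfies the hypotheses of Theorem \ref{main}, invoking the existence of minimal reductions to identify the two local definitions, and citing \cite[Proposition 29]{BF} and \cite[Theorem 2.4]{N} for the equivalence of almost Gorensteinness of $k[H]_M$, of $k[[H]]$, and almost symmetry of $H$. No gaps; the bookkeeping you describe is exactly the paper's argument.
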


In the rest of this section, let $R=k[H]$ be the numerical semigroup ring over $k$ and 
$$
\rmc(H) = \min \{n \in \Bbb Z \mid m \in H~\text{for~all}~m \in \Bbb Z~\ \! \text{with~}\ \! m \ge n\}
$$
the conductor of $H$. We set $\rmf(H) = \rmc(H) -1$. Then $\rmf(H) = \max ~(\Bbb Z \setminus H)$, which is called the Frobenius number of $H$. 
Let 
$$
\mathrm{PF}(H) = \{n \in \Bbb Z \setminus H \mid n + a_i \in H~\text{for~all}~1 \le  i \le \ell\}
$$ 
be the set of pseudo-Frobenius numbers of $H$. The graded canonical module $\rmK_R$ has the form 
$$
\rmK_R = \sum_{c \in \mathrm{PF}(H)}Rt^{-c}
$$
whence $\rmf(H) = \rma(R)$ and $\# \mathrm{PF}(H) = \rmr(R)$ (\cite[Example (2.1.9), Definition (3.1.4)]{GW}). Here, $\rmr(R)$ denotes the Cohen-Macaulay type of $R$.  
We write $\mathrm{PF}(H) =\{c_1 < c_2 < \cdots < c_r\}$; hence $r=\rmr(R)$ and $c_r = \rma(R)$. For each $1 \le i \le r$, we set $b_i = -c_{r+1-i}$. Thus $\rmK_R = \sum_{i=1}^r Rt^{b_i}$. 

Let $S = k[x_1, x_2, . . . , x_{\ell}]$ be the weighted polynomial ring over the field $k$ with $x_i \in S_{a_i}$ for all $1 \le i \le \ell$. Consider the homomorphism $\varphi : S \to R$ of graded $k$-algebras defined by $\varphi (x_i) = t^{a_i}$ for all $1 \le i \le \ell$.

Suppose $R$ is almost Gorenstein, but not a Gorenstein ring. Then $r \ge 2$. Since 
$R \subseteq t^{-b_1}\rmK_R = \sum_{i=1}^rRt^{b_i - b_1} \subseteq \overline{R}$, we have $M\rmK_R \subseteq Rt^{b_1}$, where $M=(t^h \mid 0<h \in H)$ is the graded maximal ideal of $R$. Hence, $c_r - c_i = c_{r-i}$ for all $1 \le i \le r-1$ 
 (see e.g., \cite[Proposition 2.3]{GKMT}, \cite[Theorem 3.11]{GMP}, \cite[Definition 8.1]{GTT}, and \cite[Theorem 2.4]{N}). We consider the graded $S$-linear map 
 \begin{equation*}
F=
\begin{matrix}
S\left(-b_1\right) \\
\oplus \\
S\left(-b_2\right) \\
\oplus \\
\vdots \\
\oplus \\
S\left(-b_r\right)
\end{matrix} \overset{\varepsilon}{\longrightarrow} \rmK_R \longrightarrow 0
\end{equation*}
defined by $\varepsilon(\bm{e}_i) = t^{b_i}$ for each $2 \le i \le \ell$ and $\varepsilon(\bm{e}_1) = -t^{b_1}$, where $\{\bm{e}_i\}_{1 \le i\le r}$ denotes the standard basis of $F$. Set $L = \Ker \varepsilon$. For each $2 \le i \le r$ and $1 \le j \le \ell$, we have $x_j t^{b_i} \in Rt^{b_1}$. Choose a homogeneous element $y_{ij} \in S$ of degree $a_j + b_i - b_1$ such that $x_j t^{b_i} - y_{ij} t^{b_1} = 0$; hence $x_j \bm{e}_i + y_{ij}\bm{e}_1 \in L$. Since $\{x_j \bm{e}_i + y_{ij}\bm{e}_1\}_{2 \le i \le r, \ 1 \le j \le \ell}$ forms a part of minimal basis of $L$, we have $q \ge (r-1)\ell$, where $q = \mu_S(L)$. Let $m = q-(r-1)\ell$. If $m > 0$, then we can choose homogeneous elements $z_1, z_2, \ldots, z_m \in S$ such that 
$\{x_j \bm{e}_i + y_{ij}\bm{e}_1, z_1\bm{e}_1, z_2\bm{e}_1, \ldots, z_m\bm{e}_1\}_{2 \le i \le r, \ 1 \le j \le \ell}$ is a minimal basis of $L$. Hence 
\begin{equation*}
G \overset{\Bbb M}{\longrightarrow}
F\overset{\varepsilon}{\longrightarrow} \rmK_R \longrightarrow 0
\end{equation*}
gives a minimal presentation of $\rmK_R$, where $G$ denotes a graded free $S$-module of rank $q$ and the $r \times q$ matrix $\Bbb M$ has the following form
$$
\Bbb M = 
\begin{bmatrix}
y_{21} ~y_{22}~ \cdots ~y_{2\ell} & y_{31}~ y_{32}~ \cdots ~y_{3\ell} & \cdots & y_{r1}~ y_{r2}~ \cdots ~y_{r\ell} & z_1 ~z_2 ~\cdots ~z_m \\
\ \  x_1 \ \  x_2\ \  \cdots \ \  x_{\ell} \ \  & 0 & 0 & 0  & 0 \\
0 & \ \ x_1 \ \ x_2 \ \ \cdots \ \ x_{\ell}\ \  & 0 & 0  & 0 \\
\vdots & \vdots & \ddots & \vdots & \vdots \\
0 & 0 & 0 & \ \ x_1 \ \ x_2 \ \ \cdots\ \  x_{\ell} \ \ & 0
\end{bmatrix}.
$$
Note that $b_i - b_1 = c_{i-1}$ and $a_j + (b_i - b_1) \in H$ (remember that $c_r - c_i = c_{r-i}$). We write $a_j + (b_i - b_1) = d_1 a_1 + d_2a_2 + \cdots + d_{\ell}a_{\ell}$ for some $d_i \in \bbN$. Then, because $b_i - b_1 \not\in H$, we have $d_j =0$. As $y_{ij}$ has degree $a_j + b_i -b_1$, we may choose 
$$
y_{ij} = \prod_{1 \le k \le \ell, \ k \ne j}x_k^{d_k} \ \ \ \text{for all}\ \ \ 2 \le i \le r, \ 1 \le j \le \ell.
$$ 
With this notation we reach the following, where, for each $t \ge 1$, $\rmI_t(\Bbb X)$ denotes the ideal of $S$ generated by the $t \times t$ minors of a matrix $\Bbb X$.


\begin{thm}\label{2.3}
Suppose that $R=k[H]$ is almost Gorenstein, but not a Gorenstein ring. Then, for each $2 \le i \le r$, the difference $\deg y_{ij} - \deg x_j \ (= c_{i-1})$ is constant for every $1 \le j \le \ell$, and the defining ideal of $R$ has the following form
$$
\Ker \varphi = \sum_{i=2}^r
{\rmI_2}\begin{pmatrix}
y_{i1} & y_{i2} & \cdots & y_{i\ell} \\
x_1 & x_2 & \cdots & x_{\ell}
\end{pmatrix}
+ (z_1, z_2, \cdots, z_m).
$$
\end{thm}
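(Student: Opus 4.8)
The plan is to establish the two inclusions $\mathfrak{a} \subseteq \Ker \varphi$ and $\Ker \varphi \subseteq \mathfrak{a}$ separately, where I write $\mathfrak{a}$ for the ideal on the right-hand side of the asserted equality. The degree assertion is immediate from the construction: since $y_{ij}$ was chosen homogeneous of degree $a_j + b_i - b_1$ while $\deg x_j = a_j$, the difference $\deg y_{ij} - \deg x_j = b_i - b_1 = c_{i-1}$ is independent of $j$, using $b_i = -c_{r+1-i}$ together with the relation $c_r - c_s = c_{r-s}$ recorded before the statement. It is also convenient to note that, $\rmK_R$ being a faithful $R$-module, one has $\Ker \varphi = \Ann_S \rmK_R$, so that the defining ideal can be read off from the presentation $\varepsilon$.

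For the easy inclusion $\mathfrak{a} \subseteq \Ker \varphi$, I would first compute $\varphi(y_{ij})$. From $x_j t^{b_i} - y_{ij} t^{b_1} = 0$ in $\rmK_R \subseteq \rmQ(R)$ and $\varphi(x_j) = t^{a_j}$ one reads off $\varphi(y_{ij}) = t^{a_j + b_i - b_1} = t^{a_j + c_{i-1}}$. Hence $\varphi(y_{ij}x_k - y_{ik}x_j) = t^{a_j + a_k + c_{i-1}} - t^{a_j + a_k + c_{i-1}} = 0$, so every $2\times 2$ minor lies in $\Ker \varphi$; moreover $z_s \bm{e}_1 \in L = \Ker \varepsilon$ forces $\varphi(z_s)\, t^{b_1} = 0$ in the domain $\rmQ(R)$, whence $\varphi(z_s) = 0$. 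Thus all displayed generators of $\mathfrak{a}$ belong to $\Ker \varphi$.

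The substantive half is $\Ker \varphi \subseteq \mathfrak{a}$, where I expect the real work to lie. Take a homogeneous $f \in \Ker \varphi$. Then $\varphi(f)\, t^{b_1} = 0$, so $f \bm{e}_1 \in L$, and I may write $f \bm{e}_1 = \sum_{i=2}^{r} \sum_{j=1}^{\ell} g_{ij}(x_j \bm{e}_i + y_{ij}\bm{e}_1) + \sum_{s=1}^{m} h_s z_s \bm{e}_1$ with $g_{ij}, h_s \in S$. Comparing the $\bm{e}_i$-component for each fixed $i \ge 2$ yields $\sum_{j=1}^{\ell} g_{ij} x_j = 0$, that is, $(g_{i1}, \dots, g_{i\ell})$ is a syzygy of $x_1, \dots, x_\ell$. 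The key point is that $x_1, \dots, x_\ell$ form a regular sequence in the polynomial ring $S$, so their first syzygy module is generated by the Koszul relations $x_k \bm{\epsilon}_j - x_j \bm{\epsilon}_k$; expressing each coefficient vector in this basis and substituting into the $\bm{e}_1$-component $f = \sum_{i,j} g_{ij} y_{ij} + \sum_s h_s z_s$ collapses each inner sum $\sum_j g_{ij} y_{ij}$ into a combination of the minors $y_{ij}x_k - y_{ik}x_j$, exhibiting $f \in \mathfrak{a}$. The main obstacle is precisely this Koszul step: recognizing the coefficient vectors as syzygies of a regular sequence and verifying that pairing a Koszul syzygy against the row $(y_{i1}, \dots, y_{i\ell})$ reproduces exactly the $2\times 2$ minors of the matrix in the statement. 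Once this is in hand, the two inclusions give $\Ker \varphi = \mathfrak{a}$.
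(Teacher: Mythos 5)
Your proposal is correct and takes essentially the same route as the paper: Theorem \ref{2.3} is derived there directly from the pre-theorem construction of the minimal presentation of $\rmK_R$ (in which $L=\Ker\varepsilon$ is generated by the elements $x_j\bm{e}_i+y_{ij}\bm{e}_1$ and $z_s\bm{e}_1$), and your two inclusions --- the minors and the $z_s$ being killed by $\varphi$, and conversely $f\in\Ker\varphi\Rightarrow f\bm{e}_1\in L$ followed by the Koszul-syzygy collapse against the rows $(y_{i1},\dots,y_{i\ell})$ --- are precisely the verification that construction encodes, the step the paper leaves implicit with its pointer to \cite[Theorem 7.8]{GTT}. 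Your degree computation $\deg y_{ij}-\deg x_j=b_i-b_1=c_{i-1}$ likewise matches the paper's.
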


\begin{rem}
For a higher dimensional {\it semi-Gorenstein} ring $A$, i.e., a special class of almost Gorenstein ring, the form of the defining ideals can be determined by the minimal free resolution of $A$ (\cite[Theorem 7.8]{GTT}). Our contribution in Theorem \ref{2.3} is that we succeeded in writing $y_{ij}$ concretely in case of numerical semigroup rings.
\end{rem}

\begin{rem} 
When the almost symmetric semigroup $H$ is minimally generated by four elements, 
Eto provided an explicit minimal system of generators of defining ideals of the semigroup rings $k[H]$ by using the notion of RF-matrices (\cite[Section 5]{Eto}). 
\end{rem}


\begin{ex}
The semigroup ring $R=k[H]$ for a numerical semigroup $H$ described below is an almost Gorenstein graded ring, and its defining ideal $\Ker \varphi$ is given by the following form. 
\begin{enumerate}[$(1)$]
\item Let $H=\left<7,8,13,17,19\right>$. Then $\mathrm{PF}(H) = \{6,9,12,18\}$ and
\begin{eqnarray*}
\Ker \varphi \!\!&=&\!\! {\rmI_2}
\begin{pmatrix} 
x_3 & x_1^2 & x_5 & x_1x_2^2 & x_2x_4\\
x_1 & x_2 & x_3 & x_4 & x_5 
\end{pmatrix}
 \ + \ {\rmI_2} 
 \begin{pmatrix} 
 x_2^2 & x_4 & x_1^2x_2 & x_3^2 & x_1x_2x_3\\
x_1 & x_2 & x_3 & x_4 & x_5 
 \end{pmatrix} \\
 \!\!&+&\!\! {\rmI_2} 
 \begin{pmatrix} 
 x_5 & x_1x_3 & x_2x_4 & x_1^3x_2 & x_1^2x_4\\
x_1 & x_2 & x_3 & x_4 & x_5 
 \end{pmatrix}.
\end{eqnarray*}
\item Let $H=\left<11,13,14,16,31\right>$. Then $\mathrm{PF}(H) = \{15,17,19,34\}$ and
\begin{eqnarray*}
\Ker \varphi \!\!&=&\!\! {\rmI_2} 
\begin{pmatrix} 
x_2^2 & x_3^2 & x_2x_4 & x_5 & x_1^3x_2\\
x_1 & x_2 & x_3 & x_4 & x_5 
\end{pmatrix} 
 \ + \ {\rmI_2} 
\begin{pmatrix} 
x_3^2 & x_3x_4 & x_5 & x_1^3 & x_4^3\\
x_1 & x_2 & x_3 & x_4 & x_5 
\end{pmatrix} \\
 \!\!&+&\!\! {\rmI_2} 
\begin{pmatrix} 
x_3x_4 & x_4^2 & x_1^3 & x_1^2x_2 & x_1x_2^3\\
x_1 & x_2 & x_3 & x_4 & x_5 
\end{pmatrix} 
 \ + \ (x_1x_4-x_2x_3).
\end{eqnarray*}
\item Let $H=\left<13,15,16,18,19\right>$. Then $\mathrm{PF}(H) = \{17,20,23,40\}$ and
\begin{eqnarray*}
\Ker \varphi \!\!&=&\!\! {\rmI_2} 
\begin{pmatrix} 
x_2^2 & x_3^2 & x_2x_4 & x_3x_5 & x_4^2\\
x_1 & x_2 & x_3 & x_4 & x_5 
\end{pmatrix}
 \ + \ {\rmI_2}
\begin{pmatrix} 
x_2x_4 & x_3x_5 & x_4^2 & x_5^2 & x_1^3\\
x_1 & x_2 & x_3 & x_4 & x_5
\end{pmatrix} \\
 \!\!&+&\!\!  {\rmI_2} 
\begin{pmatrix} 
x_4^2 & x_5^2 & x_1^3 & x_1^2x_2 & x_1^2x_3\\
x_1 & x_2 & x_3 & x_4 & x_5
 \end{pmatrix}
\ + \ (x_1x_4-x_2x_3, x_1x_5-x_3^2, x_2x_5-x_3x_4).
\end{eqnarray*}
\end{enumerate}
\end{ex}


The next provides an explicit minimal system of generators for defining ideals of $R=k[H]$ when $R$ has {\it minimal multiplicity}, i.e., the embedding dimension is equal to the multiplicity. 

\begin{cor}[cf. {\cite[Corollary 7.10]{GTT}}]
Suppose that $R=k[H]$ is almost Gorenstein, but not a Gorenstein ring. 
If $R$ has minimal multiplicity, 
the defining ideal of $R$ has the following form
$$
\Ker \varphi = \sum_{i=2}^r
{\rmI_2}\begin{pmatrix}
y_{i1} & y_{i2} & \cdots & y_{i\ell} \\
x_1 & x_2 & \cdots & x_{\ell}
\end{pmatrix}.
$$
\end{cor}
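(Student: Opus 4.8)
The plan is to prove directly that $\Ker\varphi=\rmA$, where $\rmA:=\sum_{i=2}^{r}\rmI_2\!\left(\begin{smallmatrix} y_{i1} & \cdots & y_{i\ell}\\ x_1 & \cdots & x_\ell\end{smallmatrix}\right)$ is the ideal of minors appearing in Theorem \ref{2.3}; this is exactly the assertion that the extra generators $z_1,\dots,z_m$ are redundant, i.e. $m=0$. The inclusion $\rmA\subseteq\Ker\varphi$ is automatic, since $\varphi(y_{ij})=t^{a_j+c_{i-1}}$ makes each $2\times2$ minor a binomial killed by $\varphi$, so the whole point is the reverse inclusion. For this I would argue by Hilbert functions: $\rmA\subseteq\Ker\varphi$ gives a graded surjection $S/\rmA\twoheadrightarrow R$, and because $\dim_k R_n=1$ for $n\in H$ and $\dim_k R_n=0$ otherwise (and $S_n\ne0$ iff $n\in H$), it suffices to show $\dim_k[S/\rmA]_n\le1$ for all $n$.

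Next I would record what minimal multiplicity and almost symmetry give combinatorially. Minimal multiplicity means $a_1=\ell$, so $a_2,\dots,a_\ell$ represent the distinct nonzero residues modulo $a_1$ and $\mathrm{Ap}(H,a_1)=\{0,a_2,\dots,a_\ell\}$; hence each $a_i+a_j-a_1$ $(i,j\ge2)$ lies in $H$, and moreover $\mathrm{PF}(H)=\{a_i-a_1\mid 2\le i\le\ell\}$, $r=\ell-1$, and $c_{i-1}=a_i-a_1$. Feeding these into the almost-symmetry relation $c_r-c_i=c_{r-i}$ recalled before Theorem \ref{2.3} (with $i=1$) yields the key identity $a_2+a_{\ell-1}=a_1+a_\ell$. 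The distinct-residue property also gives the fact I will lean on twice: the \emph{standard monomials} $x_1^{s}$ and $x_1^{s}x_t$ $(2\le t\le\ell)$ have pairwise distinct images under $\varphi$, so each element of $H$ is the $\varphi$-degree of at most one standard monomial.

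The core step is to show that for all $2\le a\le b\le\ell$ the Apéry relation $x_ax_b-x_1u_{ab}$ lies in $\rmA$, where $u_{ab}$ is any monomial with $\varphi(u_{ab})=t^{a_a+a_b-a_1}$. Granting this, the rewriting $x_ax_b\mapsto x_1u_{ab}$ strictly raises the $x_1$-exponent (and never produces $x_\ell^2$, since every output contains $x_1$), hence terminates in each fixed degree; thus modulo $\rmA$ every monomial reduces to a standard monomial, giving $\dim_k[S/\rmA]_n\le1$ and closing the argument. When $a\le\ell-1$ the relation is literally a minor: since $y_{a1}=x_a$ and $\varphi(y_{ab})=t^{a_a+a_b-a_1}$, the $(1,b)$-minor of the $i=a$ matrix is $x_ax_b-x_1y_{ab}$, and one may take $u_{ab}=y_{ab}$.

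The one case outside the range $i\le r=\ell-1$ of the matrices is $a=b=\ell$, and this is the step I expect to be the main obstacle: the relation $x_\ell^2-x_1u_{\ell\ell}$ is not visibly a minor. Here the symmetry identity rescues the argument: $a_2+a_{\ell-1}=a_1+a_\ell$ forces $\deg y_{2,\ell-1}=a_{\ell-1}+c_1=a_\ell$, hence $y_{2,\ell-1}=x_\ell$, so the $(\ell-1,\ell)$-minor of the $i=2$ matrix equals $x_\ell^2-y_{2\ell}x_{\ell-1}\in\rmA$. The monomial $y_{2\ell}x_{\ell-1}$ contains no $x_\ell$, so the minors already established reduce it to a standard monomial of $\varphi$-degree $2a_\ell$; by the uniqueness recorded above this is the same standard monomial to which $x_1u_{\ell\ell}$ reduces, whence $x_\ell^2-x_1u_{\ell\ell}\in\rmA$ as well. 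This completes the reductions, hence the Hilbert-function comparison $\dim_k[S/\rmA]_n\le1=\dim_k R_n$, hence $\Ker\varphi=\rmA$ and $m=0$.
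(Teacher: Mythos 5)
Your argument is correct, but it takes a genuinely different route from the paper's. The paper's own proof is a two-line homological argument: since $R$ has minimal multiplicity, Sally's structure theorem (\cite[Theorem 1]{Sally}) for Cohen--Macaulay rings of maximal embedding dimension gives the Betti numbers of $R$ over $S$, so the number $q=\mu_S(\Ker\varepsilon)$ of minimal relations of $\rmK_R$ equals $(\ell-2)\binom{\ell}{\ell-1}=(\ell-2)\ell=(r-1)\ell$, forcing $m=0$; the statement is then immediate from Theorem \ref{2.3}. You instead bypass both Sally's theorem and the presentation count of Theorem \ref{2.3} and prove directly that the ideal of minors is all of $\Ker\varphi$: minimal multiplicity pins down the Ap\'ery set as $\{0,a_2,\dots,a_\ell\}$, hence $r=\ell-1$, $c_{i-1}=a_i-a_1$ and $y_{i1}=x_i$, so the minors encode the rewriting rules $x_ax_b\to x_1y_{ab}$ for all pairs except $(\ell,\ell)$; almost symmetry supplies the missing rule for $x_\ell^2$ via $y_{2,\ell-1}=x_\ell$; and a termination-plus-Hilbert-function comparison finishes. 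What each approach buys: the paper's proof is short but rests on two nontrivial inputs (Sally's theorem and the minimal-presentation analysis behind Theorem \ref{2.3}), whereas yours is longer but elementary and self-contained, exhibits an explicit monomial $k$-basis of $S/\rmA$ (your \emph{standard monomials}), and in effect reproves Theorem \ref{2.3} itself in the minimal multiplicity case. One spot should be tightened: the reason $y_{2\ell}x_{\ell-1}$ reduces to a standard monomial using only the already-established minors is not that it ``contains no $x_\ell$'' (a rewrite $x_ax_b\to x_1y_{ab}$ can create $x_\ell$'s), but a degree count: the only monomial of degree $2a_\ell$ divisible by $x_\ell^2$ is $x_\ell^2$ itself, and after the first rewrite every intermediate monomial contains $x_1$, so the reduction can never reach a monomial of the form $x_1^sx_\ell^j$ with $j\ge 2$, which is the only configuration for which no rule of your first step applies. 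With that one sentence added, your proof is complete.
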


\begin{proof}
We maintain the notation as in this section. Since $R$ has minimal multiplicity, by \cite[Theorem 1]{Sally} $q = (\ell-2) \binom{\ell}{\ell-1} = (\ell-2)\ell=(r-1)\ell$, so that $m=0$.
\end{proof}

\section{Examples of Theorem \ref{main}}

We close this paper by providing some examples. 
In this section, the almost Gorenstein property for local rings refers to the definition in the sense of \cite[Definition 3.3]{GTT}.
The first example indicates that Theorem \ref{main} does not hold unless $R$ is an integral domain. 

\begin{ex}[{\cite[Example 8.8]{GTT}}]\label{3.1}
Let $U=k[s, t]$ be the polynomial ring over a field $k$ and set $R = k[s, s^3t, s^3t^2, s^3t^3]$. We regard $U$ as a $\bbZ$-graded ring under the grading $k=U_0$ and $s, t \in U_1$. Let $M$ be the graded maximal ideal of $R$. Then the following assertions hold true. 
\begin{enumerate}[$(1)$]
\item $R$, $R/sR$ are not almost Gorenstein graded rings. 
\item $R_{M}$, $R_{M}/sR_{M}$ are almost Gorenstein local rings.
\end{enumerate}
\end{ex}

\begin{proof}
Let $S=k[X, Y, Z, W]$ be the polynomial ring over $k$. We consider $S$ as a $\bbZ$-graded ring with $k=S_0$, $X \in S_1$, $Y \in S_4$, $Z \in S_5$, and $W \in S_6$. Let $\varphi : S \to R$ be the graded $k$-algebra map such that $\varphi(X) = s$, $\varphi(Y) = s^3t$, $\varphi(Z) = s^3t^2$, and $\varphi(W) = s^3t^3$. By \cite[Example 8.8]{GTT}, $R$ is not almost Gorenstein graded ring with $\rma(R) = -2$, but the local ring $R_{M}$ is almost Gorenstein.

\if0
Then $\Ker \varphi = 
\rmI_2\left(\begin{smallmatrix}
X^3 & Y & Z \\
Y & Z & W
\end{smallmatrix}\right)
$, and the $S$-module $R$ has a graded minimal free resolution of the form
\begin{equation*}
0 \longrightarrow
\begin{matrix}
S\left(-13\right) \\
\oplus \\
S\left(-14\right) 
\end{matrix}
\overset{\ 
\left[\begin{smallmatrix}
X^3 & Y \\
Y & Z \\
Z & W
\end{smallmatrix}\right] \ 
}{\longrightarrow}
\begin{matrix}
S\left(-10\right) \\
\oplus \\
S\left(-9\right) \\
\oplus \\
S\left(-8\right) 
\end{matrix}
\overset{[\Delta_1 \ -\Delta_2 \ \Delta_3]}{\longrightarrow}
S \overset{\varphi}{\longrightarrow} R \longrightarrow 0
\end{equation*}
where $\Delta_1 = YW-Z^2$, $\Delta_2 = X^3W - YZ$, and $\Delta_3 = X^3Z-Y^2$. As $\rmK_S \cong S(-16)$, by taking $\rmK_S$-dual, we get the resolution of $\rmK_R$
 \begin{equation*}
0 \longrightarrow
S\left(-16\right) \\
\overset{\ 
\left[\begin{smallmatrix}
\ \  \Delta_1 \\
-\Delta_2 \\
\ \ \Delta_3
\end{smallmatrix}\ \right] \ 
}{\longrightarrow}
\begin{matrix}
S\left(-6\right) \\
\oplus \\
S\left(-7\right) \\
\oplus \\
S\left(-8\right) 
\end{matrix}
\overset{\ 
\left[\begin{smallmatrix}
X^3 & Y & Z\\
Y & Z & W \\
\end{smallmatrix}\right]
}{\longrightarrow}
\begin{matrix}
S\left(-3\right) \\
\oplus \\
S\left(-2\right) 
\end{matrix} \overset{\varepsilon}{\longrightarrow} \rmK_R \longrightarrow 0
\end{equation*}
as a graded $S$-module. Hence $\rma(R) = -2$ and $\rmr(R) = 2$. Note that $\dim_k [\rmK_R]_2 = 1$ and $[\rmK_R]_2 = k \eta$ where $\eta = \varepsilon \left(
\left(\begin{smallmatrix}
0 \\
1
\end{smallmatrix}\right)\right)$. We now assume $R$ is an almost Gorenstein graded ring, and seek a contradiction. Choose an exact sequence 
$$
0 \to R \overset{\psi}{\to} \rmK_R(2) \to C \to 0
$$
of graded $R$-modules with $C \ne (0)$ and $\mu_R(C) = \rme^0_M(C)$. Set $\xi = \psi(1)$. Then $0 \ne \xi \in [\rmK_R]_2$, so that $\xi = c \eta$ for some $0 \ne c \in k$. Thus $\Im \psi = R \eta$. Since $\eta = \varepsilon \left(
\left(\begin{smallmatrix}
0 \\
1
\end{smallmatrix}\right)\right)$, we have the isomorphisms
$$
C \cong \rmK_R/R\eta \cong \left[S/(X^3, Y, Z)\right](-3). 
$$
This makes a contradiction, because $1 = \mu_R(C) = \rme^0_M(C)$. Hence $R$ is not almost Gorenstein as a graded ring. By setting $\rho = \varepsilon \left(
\left(\begin{smallmatrix}
1 \\
0
\end{smallmatrix}\right)\right)$, we get
$$
\rmK_R/R\rho \cong  \left[S/(Y, Z, W)\right](-2).
$$
Consider the exact sequence
$$
0 \to R \to \rmK_R(3) \to D \to 0
$$
of graded $R$-modules. Then $\mu_R(D) = 1 = \rme^0_M(D)$, so the local ring $R_M$ is almost Gorenstein. 
\fi

The exact sequence $0 \to R(-1) \overset{s}{\to} R \to R/sR \to 0$ of graded $R$-modules induces the isomorphism $\rmK_{(R/sR)} \cong (\rmK_R/s\rmK_R)(1)$. 
Note that $\rma(R/sR) = -1$. 
If $R/sR$ is an almost Gorenstein graded ring, we can choose an exact sequence 
$$
0 \to R/sR \overset{\Psi}{\to} (\rmK_R/s\rmK_R)(2) \to D \to 0
$$
of graded $R$-modules such that $MD=(0)$. Write $\Psi(1) = \overline{\xi}$ with $\xi \in [\rmK_R]_2$. We consider
$$
R \overset{\Phi}{\to} \rmK_R(2) \to C \to 0
$$
where $\Phi(1)=\xi$. Then $C/sC \cong D$. As $MD=(0)$, we get $\dim_RC \le 1$, Hence the map $\Phi$ is injective (\cite[Lemma 3.1]{GTT}), and $s$ is a non-zerodivisor on $C$ because $R/sR \otimes_R \Phi = \Psi$. Thus $\mu_R(C) = \rme^0_M(C)$. This makes a contradiction. As $X$ is superficial for $S/(Y, Z, W)$ with respect to the maximal ideal of $S$, by \cite[Theorem 3.7 (2)]{GTT}, we conclude that $R_M/sR_M$ is almost Gorenstein as a local ring. 
\end{proof}

\begin{rem}
We maintain the same notation as in Example \ref{3.1}. Let $T=k[Y, Z, W]$ be the polynomial ring over $k$. 
Note that $R/sR \cong T/(YW-Z^2, YZ, Y^2) = T/
\rmI_2
\left(
\begin{smallmatrix}
0 & Y & Z \\
Y & Z & W 
\end{smallmatrix}
\right) =V$. 
If we consider $T$ as a $\bbZ$-graded ring under the grading $k=T_0$, $Y \in T_4$, $Z \in T_5$, and $W \in T_6$, as we have shown in Example \ref{3.1} the ring $V$ is not almost Gorenstein as a graded ring. Whereas, if we consider $T$ as a $\bbZ$-graded ring with 
$k=T_0$ and $Y, Z, W \in T_1$, the $T$-module $V$ has a graded minimal free resolution of the form
\begin{equation*}
0 \longrightarrow
\begin{matrix}
T\left(-3\right) \\
\oplus \\
T\left(-3\right) 
\end{matrix}
\overset{\ 
\left[\begin{smallmatrix}
0 & Y \\
Y & Z \\
Z & W
\end{smallmatrix}\right] \ 
}{\longrightarrow}
\begin{matrix}
T\left(-2\right) \\
\oplus \\
T\left(-2\right) \\
\oplus \\
T\left(-2\right) 
\end{matrix}
\overset{[\Delta_1 \ -\Delta_2 \ \Delta_3]}{\longrightarrow}
T \overset{\varepsilon}{\longrightarrow} V \longrightarrow 0
\end{equation*}
where $\Delta_1 = YW-Z^2$, $\Delta_2 = -YZ$, and $\Delta_3 = -Y^2$.
Taking $\rmK_T$-dual, we get the resolution 
 \begin{equation*}
0 \longrightarrow
T\left(-3\right) \\
\overset{\ 
\left[\begin{smallmatrix}
\ \  \Delta_1 \\
-\Delta_2 \\
\ \ \Delta_3
\end{smallmatrix}\ \right] \ 
}{\longrightarrow}
\begin{matrix}
T\left(-1\right) \\
\oplus \\
T\left(-1\right) \\
\oplus \\
T\left(-1\right) 
\end{matrix}
\overset{\ 
\left[\begin{smallmatrix}
0 & Y & Z\\
Y & Z & W \\
\end{smallmatrix}\right]
}{\longrightarrow}
\begin{matrix}
T \\
\oplus \\
T
\end{matrix} \overset{\varepsilon}{\longrightarrow} \rmK_V \longrightarrow 0
\end{equation*}
of $\rmK_V$ as a graded $T$-module. We then consider the homomorphism
$$
V \overset{\Phi}{\to} \rmK_V \to C \to 0
$$
of graded $T$-modules defined by $\Phi(1) = \xi$, where $\xi = \varepsilon \left(
\left(\begin{smallmatrix}
1 \\
0
\end{smallmatrix}\right)\right)$. The isomorphisms $C \cong \rmK_V/V\xi \cong T/(Y, Z, W)$ guarantee that $\Phi$ is injective and $NC =(0)$ where $N=(Y, Z, W)T$. Thus $V$ is an almost Gorenstein graded ring. Hence the almost Gorenstein property for graded rings depends on the choice of its gradings. 
\end{rem}

\begin{ex}\label{3.3}
Let $S=k[X, Y, Z]$ be the polynomial ring over a field $k$. We consider $S$ as a $\bbZ$-graded ring under the grading $k=S_0$, $X \in S_3$, $Y \in S_1$, and $Z \in S_2$. Set $R = S/(Z^3- X^2, XY, YZ)$. Then $R$ is not an almost Gorenstein graded ring, but the local ring $R_M$ is almost Gorenstein, where $M$ denotes the graded maximal ideal of $R$.
\end{ex}

\begin{proof}
Let $I=(Z^3- X^2, XY, YZ)$. Note that $I=(X, Z) \cap (Z^3-X^2, Y) = 
\rmI_2
\left(
\begin{smallmatrix}
Z^2 & X & Y \\
X & Z & 0
\end{smallmatrix}
\right)$. Thus $R$ is a Cohen-Macaulay reduced ring with $\dim R=1$. 
Note that
\begin{equation*}
0 \longrightarrow
\begin{matrix}
S\left(-7\right) \\
\oplus \\
S\left(-6\right) 
\end{matrix}
\overset{\ 
\left[\begin{smallmatrix}
Z^2 & X \\
X & Z \\
Y & 0
\end{smallmatrix}\right] \ 
}{\longrightarrow}
\begin{matrix}
S\left(-3\right) \\
\oplus \\
S\left(-4\right) \\
\oplus \\
S\left(-6\right) 
\end{matrix}
\overset{[\Delta_1 \ -\Delta_2 \ \Delta_3]}{\longrightarrow}
S \overset{\varepsilon}{\longrightarrow} R \longrightarrow 0
\end{equation*}
gives a graded minimal free resolution of $R$, where $\Delta_1 = -YZ$, $\Delta_2 = XY$, and $\Delta_3 = Z^3-X^2$. Hence we get the resolution of $\rmK_R$ below
 \begin{equation*}
0 \longrightarrow
S\left(-6\right) \\
\overset{\ 
\left[\begin{smallmatrix}
\ \  \Delta_1 \\
-\Delta_2 \\
\ \ \Delta_3
\end{smallmatrix}\ \right] \ 
}{\longrightarrow}
\begin{matrix}
S\left(-3\right) \\
\oplus \\
S\left(-2\right) \\
\oplus \\
S 
\end{matrix}
\overset{\ 
\left[\begin{smallmatrix}
Z^2 & X & Y\\
X & Z & 0 \\
\end{smallmatrix}\right]
}{\longrightarrow}
\begin{matrix}
S\left(-1\right) \\
\oplus \\
S 
\end{matrix} \overset{\varepsilon}{\longrightarrow} \rmK_R \longrightarrow 0.
\end{equation*}
This shows $\rma(R) = 1$ and $[\rmK_R]_{-1} = k \xi$, where $\xi = \varepsilon \left(
\left(\begin{smallmatrix}
1 \\
0
\end{smallmatrix}\right)\right)$. Hence, for each homomorphism $\varphi : R \to \rmK_R(-1)$ of graded $S$-modules with $\varphi \ne 0$, we see that $\Im \varphi = R \xi$. Therefore
$$
\left(\rmK_R/R\xi \right)(-1) \cong S/(X, Z)
$$
which implies the map $\varphi$ is not injective; see \cite[Lemma 3.1 (2)]{GTT}. So $R$ is not almost Gorenstein as a graded ring. On the flip side, the elementary row operation
$$
\begin{pmatrix}
Z^2 & X & Y \\
X & Z & 0
\end{pmatrix} \  \ \longrightarrow \  \ 
\begin{pmatrix}
Z^2+X & X+Z & Y \\
X & Z & 0
\end{pmatrix}
$$
and the equality $(Z^2+X, X+Z, Y) = (X, Y, Z)$ in the local ring $S_N$ where $N=(X, Y, Z)S$ guarantee that $R_M$ is an almost Gorenstein local ring by \cite[Theorem 7.8]{GTT}.
\end{proof}

Example \ref{3.3} shows Theorem \ref{main} is no longer true even when $R$ is  reduced. As we show next, there is a counterexample of Theorem \ref{main} in case of homogenous reduced rings as well.

\begin{ex}
Let $S=k[X, Y, Z]$ be the polynomial ring over a field $k$. We consider $S$ as a $\bbZ$-graded ring under the grading $k=S_0$ and $X, Y, Z \in S_1$. Set $R=S/I$, where $I = (X, Y) \cap (Y, Z) \cap (Z, X) \cap (X, Y+Z)$. 
Then $R$ is not an almost Gorenstein graded ring, but the local ring $R_M$ is almost Gorenstein, where $M$ denotes the graded maximal ideal of $R$.
\end{ex}

\begin{proof}
Note that $I$ is an radical ideal of $R$ and $I=(XY, XZ, YZ(Y+Z)) = 
\rmI_2\left(\begin{smallmatrix}
Y+Z & 0 & Y \\
0 & X & YZ
\end{smallmatrix}\right)$. Then the homogeneous ring $R$ is Cohen-Macaulay,   reduced, and of dimension one. Set $\Delta_1 = XY$, $\Delta_2 = YZ(Y+Z)$, and $\Delta_3 = X(Y+Z)$. 
Since 
\begin{equation*}
0 \longrightarrow
\begin{matrix}
S\left(-3\right) \\
\oplus \\
S\left(-4\right) 
\end{matrix}
\overset{\ 
\left[\begin{smallmatrix}
Y+Z & 0 \\
0 & X \\
Y & YZ
\end{smallmatrix}\right] \ 
}{\longrightarrow}
\begin{matrix}
S\left(-2\right) \\
\oplus \\
S\left(-3\right) \\
\oplus \\
S\left(-2\right) 
\end{matrix}
\overset{[\Delta_1 \ -\Delta_2 \ \Delta_3]}{\longrightarrow}
S \overset{\varepsilon}{\longrightarrow} R \longrightarrow 0
\end{equation*}
forms a graded minimal free resolution of $R$, we get the resolution 
 \begin{equation*}
0 \longrightarrow
S\left(-3\right) \\
\overset{\ 
\left[\begin{smallmatrix}
\ \  \Delta_1 \\
-\Delta_2 \\
\ \ \Delta_3
\end{smallmatrix}\ \right] \ 
}{\longrightarrow}
\begin{matrix}
S\left(-1\right) \\
\oplus \\
S \\
\oplus \\
S \left(-1\right)
\end{matrix}
\overset{\ 
\left[\begin{smallmatrix}
Y+Z & 0 & Y\\
0 & X & YZ \\
\end{smallmatrix}\right]
}{\longrightarrow}
\begin{matrix}
S \\
\oplus \\
S \left(-1\right)
\end{matrix} \overset{\varepsilon}{\longrightarrow} \rmK_R \longrightarrow 0.
\end{equation*}
of $\rmK_R$ as a graded $S$-module. Thus $\rma(R)=1$ and $[\rmK_R]_{-1} = k \xi$, where $\xi = \varepsilon \left(
\left(\begin{smallmatrix}
0 \\
1
\end{smallmatrix}\right)\right)$. 
We have the elementary row operation
$$
\begin{pmatrix}
Y+Z & 0 & Y \\
0 & X & YZ
\end{pmatrix} \  \ \longrightarrow \  \ 
\begin{pmatrix}
Y+Z & X & Y+YZ \\
0 & X & YZ
\end{pmatrix}
$$
and the equality $(Y+Z, X, Y+YZ) = (X, Y, Z)$ in $S_N$ where $N=(X, Y, Z)S$. 
Similarly as in the proof of Example \ref{3.3}, we conclude that $R$ is not almost Gorenstein as a graded ring; while the local ring $R_M$ is almost Gorenstein. 
\end{proof}


\end{document}